\documentclass[11pt,leqno]{amsart}
\usepackage{amssymb,amsmath,amsthm,amsfonts}
\usepackage{graphicx}
\usepackage{xcolor}    

\setlength{\textwidth}{6.5in}
\setlength{\oddsidemargin}{-0.in}
\setlength{\evensidemargin}{-0.in}
\setlength{\textheight}{8.5in}
\setlength{\topmargin}{-0.1in}

\setcounter{tocdepth}{1}
\DeclareMathOperator{\diam}{diam}

\DeclareMathOperator{\id}{id}
\newcommand{\field}[1]{\mathbb{#1}}
\newcommand{\N}{\field{N}}                      
\newcommand{\Z}{\field{Z}}                      
\newcommand{\R}{\field{R}}                      

\newcommand{\de}{\delta}
\newcommand{\la}{\lambda}
\newcommand{\eps}{\epsilon}

\newcommand{\loc}{{\scriptstyle{loc}}}
\newcommand{\ovdimB}{{\overline{\dim_B}}}

\def\Barint_#1{\mathchoice
          {\mathop{\vrule width 6pt height 3 pt depth -2.5pt
                  \kern -8pt \intop}\nolimits_{#1}}%
          {\mathop{\vrule width 5pt height 3 pt depth -2.6pt
                  \kern -6pt \intop}\nolimits_{#1}}%
          {\mathop{\vrule width 5pt height 3 pt depth -2.6pt
                  \kern -6pt \intop}\nolimits_{#1}}%
          {\mathop{\vrule width 5pt height 3 pt depth -2.6pt
                  \kern -6pt \intop}\nolimits_{#1}}}

\theoremstyle{plain}
\newtheorem{theorem}{Theorem}
\newtheorem{theoremA}{Theorem}
\newtheorem{corollary}[theorem]{Corollary}

\newtheorem{lemmaA}{Lemma}
\newtheorem{proposition}[theorem]{Proposition}

\theoremstyle{definition}
\newtheorem{definition}[theorem]{Definition}

\newtheorem{remark}[theorem]{Remark}

\newtheorem{question}[theorem]{Question}

\numberwithin{theorem}{section} \numberwithin{equation}{section}

\title{Sobolev mappings on metric spaces and Minkowski dimension}
\author[Efstathios-K. Chrontsios-Garitsis]{Efstathios-K. Chrontsios-Garitsis}
\address{Department of Mathematics \\ University of Tennessee, Knoxville \\ 1403 Circle Dr \\ Knoxville, TN 37966}
\email{echronts@utk.edu, echronts@gmail.com}

\begin{document}

\begin{abstract}
We introduce the class of compactly H\"older mappings between metric spaces and determine the extent to which they distort the Minkowski dimension of a given set. These mappings are defined purely with metric notions and can be seen as a generalization of Sobolev mappings, without the requirement for a measure on the source space. In fact, we show that if $f:X\rightarrow Y$ is a continuous mapping lying in some super-critical Newtonian-Sobolev space $N^{1,p}(X,\mu)$, under standard assumptions on the metric measure space $(X,d,\mu)$, it is then a compactly H\"older mapping. The dimension distortion result we obtain is new even for Sobolev mappings between weighted Euclidean spaces and generalizes previous results of Kaufman \cite{Kaufman} and Bishop-Hakobyan-Williams \cite{BishHakWill:QSdistortion}.
\end{abstract}

\maketitle

\section{Introduction}
There has been growing interest in fractals within pure and applied mathematics, especially for the past three decades.
One of the core subjects of fractal geometry is to determine and study notions of dimension which provide more insight on the structure of fractal sets (see \cite{FalcBook} for a thorough exposition). The Minkowski, or box-counting dimension, is such a notion, which has been popular within many fields of research, such as partial differential equations (PDEs) \cite{MinkNavierStokesEq}, signal processing \cite{MinkSignal} and mathematical physics \cite{MinkPhysics}. In fact, many manuscripts in applied fields simply refer to the Minkowski dimension as ``fractal dimension", since it is very often the only dimension notion that suits that context.

An essential tool in the areas of partial differential equations and calculus of variations is the notion of Sobolev mappings, due to certain equations admitting solutions only in a weak sense. The increasing interest and the need to extend such analytical notions and tools to metric spaces soon emerged. This has been a very active line of research for the past two and a half decades with various applications, such as developing the theory of  PDEs \cite{KigamiAnFr}, calculus of variations \cite{AmbrosioBV} and optimal transportation \cite{AmbrosioGradFlow} to the non-smooth setting of fractal spaces. The theory of Sobolev-type mappings defined between metric spaces has been developed by many authors (for instance \cite{CheegerSob}, \cite{HajSob}, \cite{HajKoskSob}, \cite{KorSchSob}, \cite{NagesNewtonianSob}), who have used different ideas to adjust the theory to different settings. We refer to the book by Heinonen-Koskela-Shanmugalingam-Tyson \cite{Juha-Jeremy-etc-book} for a detailed exposition. 

A question of broad interest has been to determine in what ways certain classes of mappings distort  dimension notions. One of the earliest results in this direction is by Gehring-V\"ais\"al\"a \cite{GehringVais}, who gave quantitative bounds on how quasiconformal mappings, a special class of super-critical Sobolev mappings (see Section \ref{sec:background}), change the Hausdorff dimension of a subset of $\R^n$. Kaufman later proved similar bounds for the distortion of the Hausdorff and Minkowski dimensions under general super-critical Sobolev mappings \cite{Kaufman}. Since the aforementioned two manuscripts, the study of dimension distortion has been extended to sub-critical Sobolev mappings \cite{SobSubcriticalDimDist1}, \cite{SobSubcriticalDimDist2}, to other dimension notions \cite{OurQCspec}, \cite{SobSubcritDimDist0}, \cite{HolomSpecChron}, and to other settings, such as distortion by Sobolev and quasisymmetric mappings defined on manifolds and metric spaces  \cite{btw:heisenberg}, \cite{bmt:grassmannian}, \cite{BaloghAGMS}, \cite{BishHakWill:QSdistortion}. However, in the non-Euclidean setting all results are regarding the Hausdorff dimension, or cases where all dimensions coincide, and the distortion of the Minkowski dimension has so far not been determined outside the Euclidean case.

Motivated by this rich theory and the above open direction, we introduce a class of mappings between metric spaces which resembles that of Sobolev-type mappings, and study how they distort the Minkowski dimension. We call these mappings ``compactly H\"older", due to the improved H\"older condition they satisfy on coverings of compact sets. This class is contained in the locally H\"older class and, under standard assumptions, it contains continuous Newtonian-Sobolev and quasisymmetric mappings defined between metric  spaces. Given two metric spaces $X$, $Y$, and constants $p>1$, $\alpha\in(0,1)$, a mapping $f:X\rightarrow Y$ is \textbf{$(p,\alpha)$-compactly H\"older} if for any compact set $E\subset X$ and any 
covering of $E$ by balls, the $\alpha$-H\"older coefficients of $f$ on the balls are $p$-summable in a uniform way, as long as the balls are small enough and do not overlap too much (see Definition \ref{def: CH maps}). While this is a broader class of mappings, one promising trait compared to other Sobolev-type notions is the fact that it can be defined using purely metric tools. This provides  independence from measures that $X$ and $Y$ might be equipped with. 

Our first main result is on the distortion of the Minkowski dimension under compactly H\"older mappings.
\begin{theorem}\label{thm:main_Holder}
    Suppose $(X,d)$ is a doubling metric space and $(Y,d_Y)$ is an arbitrary metric space. For $p>1$ and $\alpha\in(0,1)$, if $f:X\rightarrow Y$ is $(p,\alpha)$-compactly H\"older and $E\subset X$ is bounded with $\dim_B E =d_E$, then
		\begin{equation}\label{eq:CHBoxdistortion}
			\dim_B f(E) \leq \frac{pd_E}{\alpha p+d_E}.
		\end{equation}
\end{theorem}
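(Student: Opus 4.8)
The plan is to bound the upper Minkowski dimension $\ovdimB f(E)$ (which dominates $\dim_B f(E)$) by estimating, for every small target scale $t>0$, the covering number $N(f(E),t)$ of $f(E)$ by balls of radius $t$ in $Y$. Fix $\eps>0$; by definition of box dimension there is $r_0>0$ with $N(E,\rho)\le \rho^{-(d_E+\eps)}$ for all $\rho\le r_0$. Since a $(p,\alpha)$-compactly H\"older map is locally $\alpha$-H\"older (so $\diam f(B(x,r))\le H_{\loc}r^\alpha$ on a neighborhood of the bounded set $E$), for each $x\in E$ there is a largest dyadic radius $r(x)\le r_0$ with $\diam f(B(x,r(x)))\le t$ while $\diam f(B(x,2r(x)))>t$; such a radius exists once $t$ is small. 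Using the doubling property of $X$, I extract from $\{B(x,r(x))\}_{x\in E}$ a boundedly overlapping subfamily $\{B_i\}=\{B(x_i,r_i)\}$, $r_i=r(x_i)$, that still covers $E$, with all centers in $E$ and each $r_i$ dyadic.

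Because $\diam f(B_i)\le t$, each $f(B_i)$ lies in a single ball of radius $t$, and since the $B_i$ cover $E$ the sets $f(B_i)$ cover $f(E)$; hence $N(f(E),t)\le M$, where $M$ is the number of balls $B_i$. The task is therefore to bound $M$, and this is where the two defining features of $f$ and $E$ enter through two competing constraints. First, applying the compactly H\"older summability of Definition~\ref{def: CH maps} to the boundedly overlapping family $\{B(x_i,2r_i)\}$, together with the lower bound $\diam f(B(x_i,2r_i))>t$, yields
\begin{equation*}
t^p\,2^{-\alpha p}\sum_i r_i^{-\alpha p}\le \sum_i\Big(\frac{\diam f(B(x_i,2r_i))}{(2r_i)^{\alpha}}\Big)^{p}\le C,
\end{equation*}
so that, writing $M_k$ for the number of $B_i$ of radius $2^{-k}$, we obtain $\sum_k M_k\,2^{k\alpha p}\le C\,t^{-p}$. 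Second, since the $B_i$ of a fixed radius $2^{-k}$ are boundedly overlapping with centers in $E$, their number is controlled by the covering number of $E$ at that scale: $M_k\le C\,N(E,2^{-k})\le C\,2^{k(d_E+\eps)}$.

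It remains to extract the sharpest bound on $M=\sum_k M_k$ from the two constraints $M_k\le C2^{k(d_E+\eps)}$ and $\sum_k M_k 2^{k\alpha p}\le Ct^{-p}$. Splitting the sum at the crossover scale $K$ determined by $2^{K(\alpha p+d_E+\eps)}\approx t^{-p}$, the coarse scales $k\le K$ contribute $\sum_{k\le K}C2^{k(d_E+\eps)}\lesssim 2^{K(d_E+\eps)}$, while the fine scales $k>K$ contribute at most $2^{-K\alpha p}\sum_{k>K}M_k2^{k\alpha p}\le 2^{-K\alpha p}Ct^{-p}$; both terms are comparable to $2^{K(d_E+\eps)}\approx t^{-p(d_E+\eps)/(\alpha p+d_E+\eps)}$. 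Thus $N(f(E),t)\le M\lesssim t^{-p(d_E+\eps)/(\alpha p+d_E+\eps)}$, which gives $\ovdimB f(E)\le \frac{p(d_E+\eps)}{\alpha p+d_E+\eps}$; letting $\eps\to0$ proves \eqref{eq:CHBoxdistortion}.

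I expect the main obstacle to be the covering bookkeeping of the first two paragraphs rather than the final optimization. Because $Y$ is an arbitrary, non-doubling metric space, image sets cannot be subdivided, so all refinement must be performed in $X$; one must produce a single boundedly overlapping cover whose image diameters are simultaneously bounded above by $t$ (to cover $f(E)$) and, through the doubled balls, bounded below by a multiple of $t$ (to feed the summability with a useful lower bound on the H\"older coefficients). Reconciling the stopping radius $r(x)$ with the slightly larger radii produced by a Vitali-type selection, guaranteeing termination of the stopping construction from local H\"older continuity, and ensuring that the compactly H\"older condition may legitimately be invoked (which presupposes a compact set, so one passes to $\overline{E}$, compact under the standing assumptions) are the technical points that require care.
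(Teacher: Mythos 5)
Your overall architecture---a stopping-time decomposition in $X$, an upper count of the stopped balls coming from $\dim_B E$, a lower bound on the H\"older coefficients of the doubled balls fed into the $p$-summability, and a split-sum optimization at a crossover scale---is sound and is in fact parallel to the paper's proof, which runs the same scheme with Hyt\"onen--Kairema dyadic cubes ($k_r$-major/minor cubes playing the role of your stopped balls, and the starting level $k_r$ playing the role of your crossover scale $K$). However, there is a genuine gap at the central step. Definition \ref{def: CH maps} applies only to a collection of balls $B_i=B(x_i,r)$ with one \emph{common} radius $r<r_E$, which \emph{covers} $E$, and whose shrunken balls $B(x_i,\eps r)$ are \emph{pairwise disjoint}. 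Your family $\{B(x_i,2r_i)\}$ has radii varying over all dyadic scales, its fixed-scale subfamilies do not cover $E$, and ``boundedly overlapping'' is weaker than the required disjointness of the shrunk balls. Consequently the chain $t^p 2^{-\alpha p}\sum_i r_i^{-\alpha p}\le \sum_i\bigl(\diam f(B(x_i,2r_i))/(2r_i)^{\alpha}\bigr)^p\le C$, i.e.\ your global constraint $\sum_k M_k 2^{k\alpha p}\le C t^{-p}$, is not justified: you are invoking a multi-scale summability that is strictly stronger than the hypothesis. (Relatedly, extracting a subcover of bounded overlap while keeping the \emph{original} radii is a Besicovitch-type selection, which fails in general doubling metric spaces; it is available only scale-by-scale, via maximal separated nets or dyadic cubes.)

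The gap is repairable, and the repair is exactly what the paper's level-by-level argument accomplishes. Work one scale at a time: with $E_k=\{x\in E:\, r(x)=2^{-k}\}$, choose a maximal $2^{-k-1}$-separated set of centers in $E_k$ (so the stopping balls $B(x_i,2^{-k})$ cover $E_k$ and their number is at most $N(E,2^{-k-2})\lesssim 2^{k(d_E+\eps)}$), complete the doubled balls $B(x_i,2^{-k+1})$ to an equal-radius cover of all of $E$ whose $\eps$-shrunk balls are pairwise disjoint (add a maximal separated net of the uncovered part; enlarging the family only enlarges the sum, so the sub-sum over your balls is still at most $C_E$), and apply Definition \ref{def: CH maps} with $\eps$ chosen small, one scale at a time. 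This yields only the per-scale bounds $M_k 2^{k\alpha p}\le C' t^{-p}$ rather than your summed constraint, but your own optimization never needed more: the coarse scales $k\le K$ use $M_k\lesssim 2^{k(d_E+\eps)}$, the fine scales $k>K$ use $M_k\lesssim t^{-p}2^{-k\alpha p}$, and both geometric sums are comparable to $2^{K(d_E+\eps)}$ at the crossover $2^{K(\alpha p+d_E+\eps)}\approx t^{-p}$; this replaces the paper's geometric series over dyadic levels. You should also handle points where the stopping rule never triggers (e.g.\ where $f$ is constant on $B(x,2r_0)$) by setting $r(x)=r_0$; such balls carry no lower bound on the image diameter, but their number is at most a constant depending on $E$ and $r_0$, not on $t$. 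With these corrections your proof closes and becomes a ball-based rendering of the paper's dyadic-cube argument.
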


Note that \eqref{eq:CHBoxdistortion} is an improvement upon the trivial bound $\dim_B f(E)\leq d_E/\alpha$ for $\alpha$-H\"older mappings (see \cite{FalcBook} Ex.~2.2). In fact, due to \eqref{eq:CHBoxdistortion} we know that for $\alpha\in (0,1)$, the $\alpha$-H\"older snowflake mapping $id_\alpha: ([-1,1],d_{\text{euc}})\rightarrow ([-1,1],d_{\text{euc}}^\alpha)$ is not compactly H\"older for any $p$, since $\dim_B id_\alpha([0,1])=\alpha^{-1}$.

We show that continuous mappings lying in the class of Newtonian-Sobolev mappings are compactly H\"older for appropriate constants $p$ and $\alpha$. This facilitates the study of the Minkowski dimension distortion due to \eqref{eq:CHBoxdistortion}. The assumptions on the metric measure space $(X,d,\mu)$ below are standard in the context of analysis on metric spaces (see Section \ref{sec:background}).

\begin{theorem}\label{thm:NewtonDim}
	Suppose $(X,d,\mu)$ is a proper, locally $Q$-homogeneous
    metric measure space supporting a $Q$-Poincar\'e inequality, and $(Y,d_Y)$ is an arbitrary metric space.
    Let $f:X\rightarrow Y$ be a continuous mapping with an upper gradient $g\in L^p_\loc(X)$ for $p>Q$. Then $f$ is $(q,1-Q/q)$-compactly H\"older for all $q\in (Q,p)$. Moreover, if $E\subset X$ is bounded with $\dim_B E =d_E<Q$, then
		\begin{equation}\label{eq:SobBoxdistortion}
			\dim_B f(E) \leq \frac{p d_E}{p-Q+d_E}<Q.
		\end{equation}
		  
\end{theorem}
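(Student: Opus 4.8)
The plan is to prove Theorem \ref{thm:NewtonDim} in two stages: first establish the compactly H\"older property from the Sobolev/upper-gradient hypotheses, then deduce the Minkowski dimension bound \eqref{eq:SobBoxdistortion} by invoking Theorem \ref{thm:main_Holder}. For the first stage, the key analytic input should be a Morrey--Sobolev type embedding on metric spaces: under a $Q$-Poincar\'e inequality together with local $Q$-homogeneity (i.e.\ an Ahlfors-type lower/upper mass bound of order $Q$ on bounded regions), a function with upper gradient $g\in L^q_{\loc}$ for $q>Q$ has a continuous representative satisfying, on each ball $B=B(x,r)$ contained in a fixed compact neighborhood, a pointwise oscillation estimate of the form
\begin{equation*}
    \abs{f(y)-f(z)} \lesssim r^{\,1-Q/q}\left(\Barint_{\lambda B} g^q\, d\mu\right)^{1/q}
\end{equation*}
for $y,z\in B$, where $\lambda\geq 1$ is the Poincar\'e dilation constant. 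This is the standard telescoping argument using the Poincar\'e inequality on a chain of dyadic subballs and summing a geometric series, which converges precisely because $q>Q$; I expect the homogeneity bound $\mu(B(x,r))\gtrsim r^Q$ to enter here to convert the averaged gradient integral into the factor $r^{1-Q/q}$.

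With that oscillation estimate in hand, the $\alpha$-H\"older coefficient of $f$ on a ball $B$ of radius $r$, with $\alpha = 1-Q/q$, is controlled by $\big(\Barint_{\lambda B} g^q\,d\mu\big)^{1/q}$, up to a constant independent of $B$. The next step is to verify the summability condition in Definition \ref{def: CH maps}: given a compact set $E$, a fixed compact neighborhood $E'$ absorbing the dilates $\lambda B$, and a covering of $E$ by small balls $B_i=B(x_i,r_i)$ with controlled overlap, I would estimate $\sum_i (\text{H\"older coeff on } B_i)^q$. Raising the oscillation bound to the $q$-th power and using local $Q$-homogeneity to write $\mu(\lambda B_i)\gtrsim r_i^Q$ (hence $1/\mu(\lambda B_i) \lesssim r_i^{-Q}$), the average becomes $\int_{\lambda B_i} g^q\, d\mu / \mu(\lambda B_i)$, and the factor $r_i^{q(1-Q/q)}=r_i^{q-Q}$ combines with $r_i^{-Q}$... which does not immediately collapse, so the cleaner route is to note that the $q$-th power of the H\"older coefficient times the appropriate scale is comparable to $\int_{\lambda B_i} g^q\,d\mu$; the bounded-overlap hypothesis then makes $\sum_i \int_{\lambda B_i} g^q\,d\mu \lesssim \int_{E'} g^q\,d\mu < \infty$, giving a uniform $q$-summability bound depending only on $E'$ and $\norm{g}_{L^q(E')}$. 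This establishes that $f$ is $(q,1-Q/q)$-compactly H\"older for every $q\in(Q,p)$, the range where $g\in L^q_{\loc}$ holds by H\"older's inequality from $g\in L^p_{\loc}$.

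For the dimension bound, I would apply Theorem \ref{thm:main_Holder} with the pair $(q,\alpha_q)$ where $\alpha_q = 1-Q/q$, yielding
\begin{equation*}
    \dim_B f(E) \leq \frac{q\, d_E}{\alpha_q\, q + d_E} = \frac{q\, d_E}{(q-Q)+d_E}.
\end{equation*}
Since this holds for all $q\in(Q,p)$, the sharpest conclusion comes from optimizing over $q$; one checks that the right-hand side, as a function of $q$ on $(Q,p)$, is monotone in the direction that makes $q=p$ the optimal (limiting) choice, giving $\dim_B f(E)\leq p d_E/(p-Q+d_E)$. The strict inequality $p d_E/(p-Q+d_E)<Q$ follows from the hypothesis $d_E<Q$ by a direct algebraic comparison: cross-multiplying reduces it to $p\,d_E < Q(p-Q+d_E)$, i.e.\ $(p-Q)(d_E-Q)<0$, which holds since $p>Q$ and $d_E<Q$.

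The main obstacle I anticipate is the first stage, specifically making the telescoping Poincar\'e argument yield a H\"older coefficient whose $q$-th power is summable against $g^q$ with the correct powers of the radius, while only assuming \emph{local} $Q$-homogeneity rather than global Ahlfors $Q$-regularity; care is needed to ensure all mass bounds, Poincar\'e dilations, and chains stay within the fixed compact set $E'$ and that the implied constants are uniform over the covering. The passage to the limit $q\to p$ and the bounded-overlap bookkeeping are comparatively routine once the per-ball estimate is correctly calibrated.
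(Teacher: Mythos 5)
Your overall architecture is the same as the paper's: a Morrey--Sobolev oscillation estimate on balls (the paper's Lemma \ref{Le:MS-inequality}, which it cites from the literature rather than re-deriving by telescoping), giving $|f|_{\alpha,B_i}\leq C_K(\diam B_i)^{Q/q}\bigl(\Barint_{4\lambda B_i}g^q\,d\mu\bigr)^{1/q}$ with $\alpha=1-Q/q$; then summation over the covering to verify Definition \ref{def: CH maps}; then Theorem \ref{thm:main_Holder} for each $q\in(Q,p)$ and the limit $q\to p$ (your monotonicity check of $q\mapsto qd_E/(q-Q+d_E)$ and the algebra showing the bound is $<Q$ are correct and agree with the paper). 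The genuine divergence, and also the one real gap in your write-up, is the summability step. Definition \ref{def: CH maps} does not hand you a covering ``with controlled overlap'': it gives balls of a \emph{common} radius $r$ (not varying radii $r_i$, as you wrote) whose $\eps r$-cores are pairwise disjoint. The balls $4\lambda B_i$ on which the gradient averages live may a priori overlap, and the paper explicitly identifies this as the obstruction; its solution is the restricted maximal-function Lemma \ref{Le:Make_disjoint_integrals}, which replaces $\Barint_{4\lambda B_i}g^q\,d\mu$ by $C_K'\Barint_{\eps B_i}\tilde g\,d\mu$ with $\tilde g\in L^{p/q}(K)$, so that exact disjointness of the cores gives $\sum_i\int_{\eps B_i}\tilde g\,d\mu\leq\int_K\tilde g\,d\mu<\infty$. (This is also precisely why the paper must take $q$ strictly below $p$: the maximal operator needs $p/q>1$.) As written, your proof assumes the conclusion of this step as a hypothesis, so it is incomplete at exactly the point the paper considers the crux.

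That said, your route can be completed, and it is then a legitimately different argument. From disjointness of the equal-radius cores and local $Q$-homogeneity one \emph{derives} bounded overlap of the dilates: if a point lies in $N$ of the balls $B(x_i,4\lambda r)$, then $N$ disjoint balls $B(x_i,\eps r)$ with centers in $K$ sit inside a single ball of radius comparable to $\lambda r$, and comparing measures via local homogeneity yields $N\leq C_{\text{hom}}(K)\,(C\lambda/\eps)^Q$ --- a constant depending on $\eps$, which is permitted since $C_E$ in Definition \ref{def: CH maps} may depend on $E$ and $\eps$. (This counting fails for balls of varying radii, so the common radius in the definition is essential to your approach.) Your exponent bookkeeping also needs the correction you half-made mid-sentence: the correct factor is $|f|^q_{\alpha,B_i}\leq C(\diam B_i)^Q\,\Barint_{4\lambda B_i}g^q\,d\mu$, and $(\diam B_i)^Q/\mu(4\lambda B_i)$ is bounded by \eqref{eq:lower-AR}, so $|f|^q_{\alpha,B_i}\lesssim\int_{4\lambda B_i}g^q\,d\mu$ with no leftover power of $r$; then bounded overlap gives $\sum_i|f|^q_{\alpha,B_i}\lesssim\int_{K'}g^q\,d\mu<\infty$. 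Two remarks on what each approach buys: completed this way, your argument needs no maximal function and therefore works at $q=p$ itself, giving $f\in CH^{p,1-Q/p}$ directly and rendering the limit $q\to p$ unnecessary, a small simplification over the paper; conversely, like the paper you still need the enlargement of $E$ to a compact set $K$ containing all ball centers and dilates (the paper's step \eqref{eq:E in comp_ball}), since the per-ball estimate requires centers in the compact set where the homogeneity and Poincar\'e data are uniform.
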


The Newtonian-Sobolev class constitutes one of the broader classes of Sobolev-type mappings between metric spaces (see Theorem 10.5.1 in \cite{Juha-Jeremy-etc-book}). This generalizes the result of Kaufman \cite{Kaufman} and settles the Minkowski dimension distortion problem on metric spaces, by providing a quantitative bound similar to that in the usual Eucledian setting. 
A non-exhaustive list of spaces where the above result could be applied includes Carnot groups, Laakso spaces, Gromov hyperbolic groups and boundaries (see Chapter 14 in \cite{Juha-Jeremy-etc-book} and the references therein). 

It should be noted that the bound \eqref{eq:SobBoxdistortion} is new even for weighted Euclidean spaces.  For instance, if $\la_n$ is the $n$-Lebesgue measure, the conditions of Theorem \ref{thm:NewtonDim} are satisfied by the weighted Euclidean metric measure space $(\R^n,d_{\text{euc}}, w\la_n)$, for a wide variety of weights $w:\R^n\rightarrow [0,\infty]$, such as the class of Muckenhoupt weights (see Chapter 1 in \cite{HeinonenWeightedSobBook}). These weights were introduced by Muckenhoupt \cite{MuckOrigin} in order to characterize the boundedness of the Hardy-Littlewood maximal operator on weighted $L^p$ spaces, and have since established an active area within Functional and Harmonic Analysis (see \cite{GrafClassMucken}, \cite{GrafModMucken}). In fact, certain Muckenhoupt weights have recently been associated with the Minkowski dimension through the notion of ``weak porosity" (see \cite{CarlosEtAlWeakPoros}, \cite{CarlosWeakPorosMetric} for details).

Theorem \ref{thm:main_Holder} also provides control on the distortion of the Minkowski dimension under quasisymmetric mappings. 

\begin{corollary}\label{cor:QS_Dim}
	Suppose $Q>1$ and $(X,d,\mu)$, is a proper, $Q$-Ahlfors regular metric measure space that supports a $p_0$-PI for $p_0\in(1,Q)$, and $(Y,d_Y)$ is a $Q$-Ahlfors regular metric space. Let $f:X\rightarrow Y$ be an $\eta$-quasisymmetric homeomorphism. If $E\subset X$ is bounded with $\dim_B E =d_E\in (0,Q)$, then
		\begin{equation}\label{eq:QSboxdistortion}
			0< \frac{(p-Q)d_E}{p-d_E}  \leq \dim_B f(E) \leq \frac{p d_E}{p-Q+d_E}<Q,
		\end{equation} where $p>Q$ only depends on $\eta(1)$, $\eta^{-1}(1)$.
		  
\end{corollary}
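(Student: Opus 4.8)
The plan is to derive both inequalities from Theorem~\ref{thm:NewtonDim}: the upper bound by applying it directly to $f$, and the lower bound by applying it to the inverse homeomorphism $f^{-1}$ and then algebraically inverting the resulting estimate. The crux is therefore to certify that both $f$ and $f^{-1}$ are continuous mappings possessing upper gradients in some $L^p_\loc$ with the \emph{same} super-critical exponent $p>Q$, and to check that the structural hypotheses of Theorem~\ref{thm:NewtonDim} (properness, local $Q$-homogeneity, and a $Q$-Poincar\'e inequality) hold for the target space $Y$ as well as for $X$.

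For the upper bound, I would first note that $X$ is $Q$-Ahlfors regular and supports a $p_0$-Poincar\'e inequality with $p_0<Q$; since a $p_0$-Poincar\'e inequality implies a $q$-Poincar\'e inequality for every $q\ge p_0$ (by H\"older's inequality applied to the gradient term), $X$ supports a $Q$-Poincar\'e inequality and is thus $Q$-Loewner. In this setting the quasiconformal theory of Heinonen--Koskela (see \cite{Juha-Jeremy-etc-book}) applies: the $\eta$-quasisymmetric map $f$ is quasiconformal, absolutely continuous along curves, and its minimal upper gradient satisfies a reverse H\"older inequality. Gehring's lemma then yields higher integrability, so $f$ has an upper gradient $g\in L^p_\loc(X)$ for some $p>Q$ that depends only on the data and, after tracking the constants, only on $\eta(1)$ and $\eta^{-1}(1)$. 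Theorem~\ref{thm:NewtonDim} now gives $\dim_B f(E)\le p\,d_E/(p-Q+d_E)<Q$, since $d_E<Q$.

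For the lower bound, observe that $Y$ is $Q$-Ahlfors regular, hence locally $Q$-homogeneous, and that the $Q$-Loewner property is a quasisymmetric invariant among $Q$-regular spaces (Tyson); thus $Y$ too supports a $Q$-Poincar\'e inequality, and, being complete and $Q$-regular, is proper, so Theorem~\ref{thm:NewtonDim} is applicable on $Y$. The inverse $f^{-1}\colon Y\to X$ is $\eta_1$-quasisymmetric with $\eta_1(t)=1/\eta^{-1}(1/t)$, so that $\eta_1(1)$ and $\eta_1^{-1}(1)$ are governed by $\eta^{-1}(1)$ and $\eta(1)$; consequently the same exponent $p>Q$ serves $f^{-1}$. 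Writing $s=\dim_B f(E)$, which satisfies $s<Q$ by the upper bound already proved, Theorem~\ref{thm:NewtonDim} applied to $f^{-1}$ and the set $f(E)$ gives
\begin{equation*}
 d_E=\dim_B f^{-1}(f(E))\le \frac{p\,s}{p-Q+s}.
\end{equation*}
Since $p-Q+s>0$, rearranging yields $d_E(p-Q)\le s(p-d_E)$, and because $p>Q>d_E$ we may divide to obtain $s\ge (p-Q)d_E/(p-d_E)>0$, the desired lower bound.

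The main obstacle I expect is not the final algebra but the bookkeeping that makes the \emph{single} exponent $p$ work symmetrically for $f$ and $f^{-1}$: this requires the quasisymmetric invariance of the $Q$-Poincar\'e/Loewner condition on the target (so that $f^{-1}$ is covered by Theorem~\ref{thm:NewtonDim} at all) together with a higher-integrability statement whose exponent is controlled purely by $\eta(1)$ and $\eta^{-1}(1)$, uniformly for the map and its inverse. Verifying that the target inherits properness and local $Q$-homogeneity, and that the higher-integrability constants are genuinely two-sided, is precisely where the quasiconformal machinery must be invoked with care.
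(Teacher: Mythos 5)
Your proposal is correct and follows essentially the same route as the paper: invoke the Heinonen--Koskela higher-integrability theorem to obtain an upper gradient of $f$ in $L^p_\loc(X)$ with $p>Q$, apply Theorem~\ref{thm:NewtonDim} for the upper bound, and run the same argument on the quasisymmetric inverse $f^{-1}$ to get the lower bound. The details you supply (Tyson's quasisymmetric invariance of the Loewner/Poincar\'e condition so that $Y$ meets the hypotheses, properness of $Y$, and the algebraic inversion of the estimate) are precisely what the paper compresses into ``an identical argument on the inverse of $f$.''
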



Bishop-Hakobyan-Williams \cite{BishHakWill:QSdistortion} studied this problem  in the case where the input set $E$ is Ahlfors regular, which implies that all dimension notions for $E$ coincide. Their motivation was the absolute continuity on lines  property (ACL) that quasisymmetric mappings satisfy in the Euclidean setting. Their result provides a fundamental generalization of this fact in the metric measure spaces setting, in the sense that an Ahlfors regular set can be considered a generalization of a line. In general, however, we could have the Hausdorff and Minkowski dimensions of $E$ to differ. In such a case, the results from \cite{BishHakWill:QSdistortion} cannot be applied, while \eqref{eq:QSboxdistortion} provides quantitative bounds on $\dim_B f(E)$ and the result of Balogh-Tyson-Wildrick (Theorem 1.1 in \cite{BaloghAGMS}) provides similar bounds on $\dim_H f(E)$.

This paper is organized as follows. Section \ref{sec:background} reviews the required background on metric measure spaces, along with the notions of the Minkowski dimension, Newtonian-Sobolev and quasisymmetric mappings. A proof of a crucial characterization of the Minkowski dimension using dyadic cubes in metric spaces is also provided.  In Section \ref{sec:DimDistProofs} we prove our main results, namely Theorems \ref{thm:main_Holder}, \ref{thm:NewtonDim} and Corollary \ref{cor:QS_Dim}. Section \ref{sec:Final Remarks} contains further remarks and future directions motivated by this work.

~

\paragraph{\bf Acknowledgments.} The author wishes to thank Carlos Mudarra and Vyron Vellis for the fruitful conversations, and especially Jeremy Tyson for introducing him to the  area of analysis on metric spaces.

\section{Background}\label{sec:background}

\subsection{Metric spaces and dimensions.}\label{subsec:MS}
Let $(X,d)$ be a metric space. We  use the Polish notation $d(x,y)=|x-y|$ for all $x,y \in X$ and denote the open ball centered at $x$ of radius $r>0$ by
$$
B(x,r):= \{z\in X: \,\, |x-z|<r \}.
$$

Given a ball $B=B(x,r) \subset X$, we  denote by $\lambda B$ the ball $B(x,\lambda r)$, for $\lambda>0$. We say that $(X,d)$ is a \textit{doubling metric space} if there is a \textit{doubling constant} $C_d\geq 1$ such that for every $x\in X$, $r>0$, the smallest number of balls of radius $r$ needed to cover $B(x,2r)$ is at most $C_d$. Note that the doubling property implies that $X$ is a separable metric space.

Let $E$ be a bounded subset of $X$. For $r>0$, denote by $N(E,r)$ the smallest number of sets of diameter at most $r$ needed to cover $E$. The {\it (upper) Minkowski dimension} of $E$ is defined as
$$
\ovdimB(E) = \limsup_{r\to 0} \frac{\log N(E,r)}{\log(1/r)}.
$$
This notion is also known as \textit{upper box-counting dimension}, which justifies the notation with the subscript ``B" typically used in the literature (see \cite{FalcBook}, \cite{FraserBook}). We drop the adjective `upper' and the bar notation throughout this paper as we will make no reference to the lower Minkowski dimension. For any fixed $r_0\leq \diam E$, an equivalent formulation is
$$
\dim_B(E) = \inf \{d>0 \,:\, \exists\,C>0\mbox{ s.t. } N(E,r) \le C r^{-d} \mbox{ for all $0<r\le r_0$} \}.
$$

On Euclidean spaces $X=\R^n$ with the usual metric one can use dyadic cubes instead of arbitrary sets of diameter at most $r$ to define the Minkowski dimension (see \cite{FraserBook}, \cite{FalcBook}). On arbitrary metric spaces, however, there are various generalizations of dyadic cube constructions. One of the first manuscripts addressing this idea was by David  \cite{CubesGuyC1} (see also \cite{CubesGuyC2}), while one of the first explicit constructions of a system of dyadic cubes is due to Christ \cite{ChristCubes} (see also \cite{ChristCubesBook}). Other important dyadic cube constructions on metric spaces can be found in \cite{MoreCubes1}, \cite{Hyt:dyadic}, \cite{MoreCubes2}, \cite{MoreCubes3}, which is by no means an exhaustive list. We believe the most fitting  notion for our context to be the one due to Hyt\"onen and Kairema.
\begin{theoremA}[Hyt\"onen, Kairema \cite{Hyt:dyadic}]\label{thm:Dydadic}
	Suppose $(X,d)$ is a doubling metric space. Let $0<c_0\leq C_0<\infty$ and $\delta\in (0,1)$ with $12 C_0 \delta \leq c_0$. For any non-negative $k\in \Z$ and collection of points $\{ z_i^k \}_{i\in I_k}$ with
	
	\begin{equation}\label{eq:centers_away}
		|z_i^k-z_j^k|\geq c_0 \de^k, \,\,\,\, \text{for} \,\, i\neq j
	\end{equation}
	and
	\begin{equation}\label{eq:points_close_centers}
	\min_i |z_i^k-x|< C_0 \de^k, \,\,\,\, \text{for all} \,\, x\in X
	\end{equation}
	we can construct a collection of sets $\{ Q_i^k \}_{i\in I_k}$ such that
	\begin{itemize}
		\item[(i)] if $l \geq k$ then for any $i\in I_k$, $j\in I_l$ either $Q_j^l\subset Q_i^k$ or $Q_j^l \cap Q_i^k=\emptyset$,
		\vspace{0.1cm}
		\item[(ii)] $X$ is equal to the disjoint union $\bigcup\limits_{i\in I_k} Q_i^k$, for every $k\in\N$
		\vspace{0.1cm}
		\item [(iii)] $B(z_i^k, c_0 \de^k /3) \subset Q_i^k \subset B(z_i^k, 2C_0 \de^k)=:B(Q_i^k)$ for every $k\in \N$,
		\vspace{0.1cm}
		\item [(iv)] if $l\geq k$ and $Q_j^l\subset Q_i^k$, then $B(Q_j^l)\subset B(Q_i^k)$.
	\end{itemize}
	
	For non-negative $k\in \Z$, we call the sets $Q_i^k$ from the construction of Theorem \ref{thm:Dydadic} ($\de$-)\textit{dyadic cubes} of level $k$ of $X$.
	
\end{theoremA}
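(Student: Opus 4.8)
\emph{Proof plan.} The plan is to manufacture the cubes from the prescribed net of centers in two stages: first organize the centers $\{z_i^k\}$ into a tree, and then distribute the points of $X$ through that tree so that nesting across levels is automatic. To build the tree I would fix a \emph{parent map}: for each $k$ and each $j\in I_{k+1}$, let $\pi(j)\in I_k$ be chosen so that $z_{\pi(j)}^k$ is a level-$k$ center nearest to $z_j^{k+1}$, with ties broken by a fixed enumeration of the (countable, by separability) index sets. The covering condition \eqref{eq:points_close_centers} places this nearest center within $C_0\delta^k$ of $z_j^{k+1}$, so every child sits close to its parent; iterating $\pi$ yields an ancestor relation between centers at any two levels $k\leq l$ and hence a tree on $\bigsqcup_k I_k$. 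Summing the parent--child estimates along a branch bounds the distance from any descendant center at level $l$ to its level-$k$ ancestor by $\sum_{m\geq 0}C_0\delta^{k+m}$, and the hypothesis $12C_0\delta\leq c_0$ forces $\delta$ to be small enough that this geometric series stays below $2C_0\delta^k$. This single estimate is what will later produce the clean outer radius in (iii).

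Next I would define $Q_i^k$ by following the tree rather than by a naive level-by-level Voronoi assignment, which does not nest: each $x\in X$ is routed to the level-$k$ ancestor of the centers nearest to it at arbitrarily fine levels, and $Q_i^k$ is declared to be the set of points routed to $i$. With this definition the partition property (ii) and the nesting property (i) are immediate, since every $x$ has a unique ancestor at each level and the descendant sets of distinct level-$k$ centers are disjoint. The first half of (iii), the inner inclusion $B(z_i^k,c_0\delta^k/3)\subset Q_i^k$, is also short: the separation \eqref{eq:centers_away} gives $|x-z_j^k|\geq c_0\delta^k-c_0\delta^k/3>c_0\delta^k/3\geq|x-z_i^k|$ for every other center $z_j^k$, so $z_i^k$ is unambiguously nearest and $x$ cannot be routed elsewhere.

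The harder half is the outer inclusion $Q_i^k\subset B(z_i^k,2C_0\delta^k)$, where the drift of $x$ out to the descendant center it follows must be controlled by the series $\sum_{m\geq 1}C_0\delta^{k+m}<2C_0\delta^k$ from the first step. Property (iv) then follows at once: if $Q_j^l\subset Q_i^k$ then $z_j^l$ is a descendant of $z_i^k$, and combining the bound on $|z_j^l-z_i^k|$ with the radii $2C_0\delta^l$ and $2C_0\delta^k$ through the triangle inequality yields $B(Q_j^l)\subset B(Q_i^k)$. I expect the main obstacle to be reconciling the two demands placed on the assignment rule, namely the metric proximity required by (iii) against the combinatorial consistency required by (i)--(ii); the quantitative hypothesis $12C_0\delta\leq c_0$ is exactly the calibration that allows the inner radius $c_0\delta^k/3$ and the outer radius $2C_0\delta^k$ to hold simultaneously.
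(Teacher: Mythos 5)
The paper does not prove this statement at all: it is quoted as Theorem A and attributed to Hyt\"onen--Kairema \cite{Hyt:dyadic}, so there is no internal proof to compare against; your proposal has to be judged against the actual construction in that reference, whose overall architecture (a parent map on the nets $\{z_i^k\}$, an ancestor tree, geometric-series drift estimates of size $C_0\delta^k/(1-\delta)$) your plan does correctly reproduce. The parent-map stage and the quantitative estimates are sound: $12C_0\delta\leq c_0\leq C_0$ gives $\delta\leq 1/12$, so the series $\sum_{m\geq 0}C_0\delta^{k+m}$ is indeed below $2C_0\delta^k$, and your derivations of the inner ball, the outer ball, and (iv) would go through \emph{once the cubes are well defined}.

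That is where the genuine gap lies. Your assignment rule --- ``each $x\in X$ is routed to the level-$k$ ancestor of the centers nearest to it at arbitrarily fine levels'' --- is not a definition, because the level-$k$ ancestor of the nearest level-$l$ center to $x$ can depend on $l$ and need not stabilize as $l\to\infty$: for points lying near the common ``boundary'' of two branches of the tree (think of the point $1/2$ for dyadic intervals in $\R$), the nearest fine-level center can jump between branches infinitely often. Under an ``eventually'' reading of your rule such points are assigned to no cube, breaking (ii); under an ``infinitely often'' reading they are assigned to several, breaking disjointness. Your inner-ball argument does not repair this, and moreover it is itself incomplete as stated: being nearest to $z_i^k$ at level $k$ does not by itself control the routing, which is determined by ancestors of nearest centers at \emph{finer} levels; one needs an induction showing that for $x\in B(z_i^k,c_0\delta^k/3)$ every nearest fine-level center has its whole ancestor chain passing through $z_i^k$ (this induction does work, using $12C_0\delta\leq c_0$, but only for points in the inner balls). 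Handling the remaining ``boundary'' points is precisely the hard part of Christ's and Hyt\"onen--Kairema's proofs: one defines closed cubes as closures of descendant center sets (these cover $X$ but overlap on boundaries) and then makes a selection --- via a well-ordering of the index sets and an inductive/limiting procedure --- that assigns each boundary point to exactly one cube at every level \emph{consistently across levels}, so that (i) and (ii) hold simultaneously. Without this selection argument, your construction does not yield a partition, and this is a missing idea rather than a routine detail.
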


Fix $\delta$, $c_0$ and $C_0$ as in Theorem \ref{thm:Dydadic}. Moreover, for every non-negative $k\in \Z$ we fix a collection of points $\{ z_i^k \}_{i\in I_k}$ and the corresponding collection of $\delta$-dyadic cubes $Q^k_i$. To see why such a collection of points exists, consider the covering $\{ B(z,c_0 \delta^k): z\in X \}$ of $X$ and apply the $5B$-covering lemma. By separability of $X$ and by choosing $c_0$ and $C_0$ so that $5c_0 \delta^k<C_0 \delta^k$, the existence of centers $\{ z_i^k \}_{i\in I_k}$ is ensured. We fix such a system of dyadic cubes for the rest of the paper and denote by $N_k(E)$ the number of dyadic cubes of level $k$ that intersect the set $E\subset X$.

\begin{proposition}\label{prop:boxdyadic}
Let $E \subset X$ be a bounded subset and $k_E\in \N$ a fixed integer for which $\delta^{k_E}\leq \diam E$. Then
$$
\dim_B(E) = \inf \{d>0 \,:\, \exists\,C>0\mbox{ s.t. } N_k(E) \le C \de^{-k d} \mbox{ for all integers $k\geq k_E$} \}.
$$
\end{proposition}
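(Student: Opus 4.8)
The plan is to compare the dyadic counting function $N_k(E)$ with the covering function $N(E,r)$ at scales $r$ comparable to $\delta^k$, and then transfer the comparison to the two infima. Throughout I use the threshold formulation of $\dim_B(E)$ recorded above, and I write $d_{\mathrm{dyad}}$ for the infimum on the right-hand side of the proposition. Since $\dim_B(E)$ is insensitive to the choice of lower cutoff $r_0$, and $d_{\mathrm{dyad}}$ is likewise insensitive to $k_E$, it suffices to establish, for all sufficiently large $k$, two sandwiching inequalities $N(E, c_1\delta^k)\le N_k(E)$ and $N_k(E)\le M\,N(E, c_2\delta^k)$ with constants $c_1,c_2,M$ independent of $k$, and then interpolate over $r$.

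The first inequality is immediate from the geometry of the cubes. By property (ii) of Theorem \ref{thm:Dydadic} the level-$k$ cubes partition $X$, so the $N_k(E)$ cubes that meet $E$ already cover $E$; by the right-hand inclusion in (iii) each such cube satisfies $\diam Q_i^k\le 4C_0\delta^k$. Hence $N(E, 4C_0\delta^k)\le N_k(E)$. To interpolate, for a given small $r$ I choose the largest integer $k$ with $4C_0\delta^k\ge r$; monotonicity of $N(E,\cdot)$ gives $N(E,r)\le N(E,4C_0\delta^k)$, and $\delta^k\ge r/(4C_0)$ turns a bound $N_k(E)\le C\delta^{-kd}$ into $N(E,r)\le C(4C_0)^d r^{-d}$, whence $\dim_B(E)\le d$. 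Taking the infimum over admissible $d$ yields $\dim_B(E)\le d_{\mathrm{dyad}}$.

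For the reverse inequality I would argue by packing. For each level-$k$ cube $Q_i^k$ meeting $E$ fix a point $e_i\in Q_i^k\cap E$; by (iii) it satisfies $|e_i-z_i^k|<2C_0\delta^k$, and by \eqref{eq:centers_away} together with (iii) the inscribed balls $B(z_i^k, c_0\delta^k/3)$ are pairwise disjoint. Given an optimal cover of $E$ by sets $U_1,\dots,U_N$ of diameter at most $\delta^k$, so that $N=N(E,\delta^k)$, I assign to each cube one set $U_j$ containing $e_i$. If two cubes $Q_i^k,Q_{i'}^k$ are assigned to the same $U_j$, the triangle inequality gives $|z_i^k-z_{i'}^k|<(4C_0+1)\delta^k$, so all centers sent to a fixed $U_j$ lie in a single ball of radius $(4C_0+1)\delta^k$. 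These centers are $c_0\delta^k$-separated and carry the disjoint balls $B(z_i^k,c_0\delta^k/3)$, so the doubling property bounds their number by a constant $M=M(C_d,C_0/c_0)$ independent of $k$. Summing over $j$ gives $N_k(E)\le M\,N(E,\delta^k)$, and the same interpolation turns $N(E,r)\le Cr^{-d}$ into $N_k(E)\le MC\delta^{-kd}$, yielding $d_{\mathrm{dyad}}\le\dim_B(E)$ and hence equality.

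The main obstacle is the packing step: one must convert the separation of the cube centers and the disjointness of the inscribed balls into a scale-free counting bound. This is exactly where the doubling hypothesis on $X$ enters, through the standard fact that iterating the covering inequality defining $C_d$ controls the number of $\rho$-separated points inside a ball of radius comparable to $\rho$; the key point is that $M$ depends only on $C_d$ and the fixed ratio $C_0/c_0$, not on $k$ or on $E$. The remaining bookkeeping — reconciling the cutoffs $k_E$ and $r_0$ and absorbing the finitely many excluded scales into the multiplicative constants — is routine.
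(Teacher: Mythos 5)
Your proposal is correct and follows essentially the same route as the paper: both establish the two-sided comparison $N(E,4C_0\delta^k)\le N_k(E)\le M\,N(E,\delta^k)$ using the partition property of the cubes for one side and, for the other, a packing bound on the disjoint inscribed balls $B(z_i^k,c_0\delta^k/3)$ via the doubling property, and then transfer between the two infima by matching scales $r\sim\delta^k$. The only (immaterial) difference is bookkeeping in the packing step: you assign each cube to a covering set through a representative point $e_i\in Q_i^k\cap E$ and bound the number of centers assigned to a common $U_j$, whereas the paper bounds the number of cubes meeting a fixed covering set $U$ by showing their union has diameter at most $(1+4C_0)\delta^k$.
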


\begin{proof}
	Fix a positive integer $k_E \geq \log(\diam E)/\log \de$. Let $$A=\{d>0 \,:\, \exists\,C>0\mbox{ s.t. } N(E,r) \le C r^{-d} \mbox{ for all $0<r\le\diam E$} \}$$ and $$\Delta =  \{d>0 \,:\, \exists\,C>0\mbox{ s.t. } N_k(E) \le C \de^{-k d} \mbox{ for all integers $k\geq k_E$} \}.$$ 
 
    We will first show that for all $k\geq k_E$ we have
	\begin{equation}\label{eq:Nrelations}
		N(E,4C_0 \delta^k) \leq N_k(E) \leq C' N(E,\de^k),
	\end{equation} where $C'=C_d \left( \frac{c_0}{3(4C_0+1)} \right)^{-\log_2 C_d}$ depends only on the dyadic cube constants and the doubling constant $C_d$ of the space $X$.

	The left-hand side of \eqref{eq:Nrelations} is trivial, since $Q_i^k \subset B(z_i^k, 2C_0 \de^k)$, which implies that $\diam Q_i^k \leq 4C_0 \de^k$ as needed.
	
	Let $U$ be a set of diameter at most $\de^k$ and set $I_k^U:= \{ i\in I_k : Q_i^k \cap U\neq \emptyset \}$. Denote by $\mathcal{Q}_U$ the union of all $Q_i^k$ for all $i\in I_k^U$ and fix $x_0 \in \mathcal{Q}_U$.
	
	Suppose $x_0 \in U$ and let $x \in  \mathcal{Q}_U$. If $x\in U$, then $|x_0-x|\leq \de^k$, due to $\diam U\leq \de^k$. If $x\notin U$, it lies in some $Q_{i_x}^k$ for some $i_x\in I_k^U$ and there is some $x'\in U\cap Q_{i_x}^k$ such that
	$$
	|x_0-x|\leq |x_0-x'|+|x'-x| \leq \diam U + \diam Q_{i_x}^k \leq \de^k + 4C_0 \de^k.
	$$ Note that the existence of such $x'$ is guaranteed by the definition of $\mathcal{Q}_U$, which is the union of all cubes of level $k$ intersecting $U$. Hence, if $x_0\in U$, we get that $|x_0-x| \leq \de^k + 4C_0 \de^k$ for all $x \in\mathcal{Q}_U$.
	
	Similarly, repeating the above arguments in the case where $x_0 \in \mathcal{Q}_U \setminus U$ also implies $|x_0-x| \leq \de^k + 4C_0 \de^k$ for all $x \in  \mathcal{Q}_U$. Since $x_0\in  \mathcal{Q}_U$ was arbitrary in both cases, we have shown
	$$ 
	\diam (\mathcal{Q}_U)\leq \de^k (1+4C_0).
	$$
	
	As a result, the set $\mathcal{Q}_U$ lies in $B(y, (4C_0+1)\de^k)$ for some $y\in \mathcal{Q}_U$. But each cube $Q_i^k$ includes a ball $B(z_i^k, c_0 \de^k /3)$ and all these are disjoint by the choice of the points $\{ z_i^k \}$ in Theorem \ref{thm:Dydadic}. By the doubling property of $X$, these disjoint balls of radius $ c_0 \de^k /3$ that lie in a larger ball of radius $(4C_0+1)\de^k$ can be at most $C'=C_d \left( \frac{c_0}{3(4C_0+1)} \right)^{-\log_2 C_d}$ in number.
	
	Hence, if $E$ is covered by $N(E,\de^k)$ sets of diameter at most $\de^k$, we can cover each such $U$ by at most $C'$ many $\de$-dyadic cubes of level $k$, resulting in a collection that consists of  at most $C' N(E,\de^k)$ cubes in total and covers $E$. This implies $N_k(E) \leq C' N(E,\de^k)$ as needed.
	
	We will now use \eqref{eq:Nrelations} to prove that $A=\Delta$. Let $d \in A$ and $k\geq k_E$. Then there exists a constant $C$ such that $N(E,r)\leq C r^{-d}$ for $r=\de^k$. By the right-hand side of \eqref{eq:Nrelations}, $N_k(E)\leq C' C r^{-d}= C' C\de^{-k d}$, which implies that $d \in \Delta$. Since $d$ was arbitrary, the inclusion $A\subset \Delta$ is proved.
	
	Now let $d \in \Delta$ and $r \in (0, \diam E)$. Let $k\geq k_E$ be the smallest integer such that $4\de^k C_0 \leq r < 4\de^{k+1} C_0$. This, along with \eqref{eq:Nrelations}, imply
	$$
	N(E,r)\leq N(E,4C_0\de^k) \leq N_k(E).
	$$ But $N_k(E) \leq C \de^{-k d} \leq C(4 \de C_0)^d r^{-d}$ by choice of $k$ and $d$. Hence, we have shown that
	$$
	N(E,r)\leq C (4\de C_0)^d r^{-d},
	$$ which implies that $d \in A$. Since $d$ was arbitrary, we have shown that $\Delta$ is also a subset of $A$, concluding the proof.
\end{proof}

\begin{remark}
The construction of dyadic cubes in \cite{Hyt:dyadic} was actually given for quasimetric spaces. As a result, Proposition \ref{prop:boxdyadic} is also true if $X$ is a quasimetric doubling space. The proof is almost identical, with the only difference being the dependence of a few of the constants on the quasimetric constant of the space.
\end{remark}


\subsection{Mappings between metric spaces}\label{subsec:BackSobolev}
Given $\alpha\in (0,1)$, a mapping $f:X\rightarrow Y$ and a set $B\subset X$, we  define the \textit{$\alpha$-H\"older coefficient} of $f$ on $B$ as
$$
|f|_{\alpha, B}:= \sup\left\{ \frac{|f(x)-f(y)|}{|x-y|^\alpha}: \, x, y \in B \,\, \text{distinct} \right\}.
$$ If $|f|_{\alpha, B}<\infty$ then we say that $f$ is \textit{$\alpha$-H\"older continuous} in $B$. 

Given an at most countable index set $I$, we denote by $\ell^p(I)$ the space of real-valued sequences $\{c_i\}_{i\in I}$ with finite $p$-norm $(\sum_{i\in I} c_i^p)^{1/p}<\infty$. We call $\sum_{i\in I} c_i^p$ the \textit{$p$-sum} of the sequence $\{c_i\}_{i\in \N}$.

For the rest of the paper, all index sets are assumed to be at most countable. We are now prepared to introduce the class of compactly H\"older mappings.
\begin{definition}\label{def: CH maps}
    Let $f:X\rightarrow Y$ be a mapping between two arbitrary metric spaces. For $p>1$, $\alpha\in(0,1)$, we say $f$ is $(p,\alpha)$\textbf{-compactly H\"older}, and write $f\in CH^{p,\alpha}(X,Y)$, if for any compact set $E\subset X$ and any $\eps\in (0,1)$ there are $r_E>0$ and $C_E>0$ satisfying the following: \\
if $\{B_i\}_{i\in I}$ is a collection of balls $B_i:=B(x_i,r)$ with $x_i\in X$, $r<r_E$ that covers $E$ and $B(x_i,\eps r)\cap B(x_j,\eps r)=\emptyset$ for all distinct $i, j\in I$, then the $p$-sum of the H\"older coefficients of $f$ on $B_i$ is at most $C_E$, i.e.,
\begin{equation}\label{eq: CH-def-inequality}
    \sum\limits_{i\in I} |f|_{\alpha, B_i}^p\leq C_E.
\end{equation}
\end{definition}
Here we follow the convention that if $\{B(x_i,r)\}_{i\in I}$ covers $E$, it is implied that $B(x_i,r)\cap E\neq\emptyset$ for all $i$, but not all $x_i$  necessarily lie in $E$. Note that applying the definition on singleton sets yields that compactly H\"older mappings are locally H\"older continuous, and, hence, uniformly continuous on compact sets. Moreover, in the setting of Definition \ref{def: CH maps} it is actually implied by \eqref{eq: CH-def-inequality} that there are $C_i>0$ such that
\begin{equation}\label{eq: CH-inequality}
    \diam f(B(x_i,r))\leq C_i (\diam B(x_i,r))^\alpha
\end{equation} with $\sum_{i\in I}C_i^p\leq C_E$. This inequality is crucial in the proof of our dimension distortion results and provides more insight on the relation with the Euclidean setting.

The motivation for Definition \ref{def: CH maps} comes from continuous super-critical Sobolev maps between Euclidean spaces, i.e. continuous maps in $W^{1,p}(\Omega;\R^n)$ with $\Omega\subset \R^n$ and $p>n$. These maps satisfy the Morrey-Sobolev inequality on balls and cubes [see, for instance, \cite{EvansPDE} p. 280 Thm 4 and p.~283 Remark]. This inequality implies the local H\"older continuity of the map with exponent $1-n/p$, although it is in fact a stronger property. One way to observe that is to apply the inequality on all dyadic cubes that cover a compact subset $E$ lying in $\Omega$. Then all the resulting inequalities resemble \eqref{eq: CH-inequality} with $C_E$ being the $L^p$-norm of the gradient on $E$ times a uniform constant, and the sequence $C_i$ being the $L^p$ norm of the gradient on each cube times a uniform constant. 

For the Sobolev-type mappings we discuss next we need a measure for our space. A triplet $(X,d,\mu)$ is called a metric measure space if $(X,d)$ is separable and $\mu$ is a Borel measure on $X$ that assigns a positive and finite value on all balls in $X$. Thus, throughout the paper all measures are considered to have the aforementioned properties, even if not stated explicitly. For $p\in (0,\infty]$ we denote the space of \textit{$p$-integrable} real-valued functions defined on $X$ by $L^p(X,\mu)$, or simply by $L^p(X)$ if the measure follows from the context. We also denote by $L^p_\loc(X)$ the space of locally $p$-integrable real-valued functions defined on $X$. Moreover, for a ball $B\subset X$ and $u\in L^1(B)$ we denote by $u_B$ the average of $u$ over $B$, i.e., $u_B:= \Barint_{B}u d\mu = \mu(B)^{-1} \int_{B} u d\mu$.

The notion of upper gradients is necessary for the definition of certain types of Sobolev spaces. We say that a Borel function $g:X\rightarrow [0,\infty]$ is an \textit{upper gradient} of a continuous map $f:X\rightarrow Y$ if for every rectifiable curve $\gamma:[0,1]\rightarrow X$ we have
$$
|f(\gamma(0))-f(\gamma(1))|\leq \int_\gamma g ds.
$$ The notion was introduced by Heinonen and Koskela in \cite{HeinKoskQCbegin} under a different name, and was employed by Shanmugalingam in \cite{NagesPHDNewtonianSob} and \cite{NagesNewtonianSob} in order to define an appropriate notion of Sobolev mappings in the metric measure space context. More specifically, for $p>1$, by viewing $Y$ as a subset of a Banach space under the Kuratowski-Fr\'echet isometric embedding (see for instance p.~105-106 in \cite{Juha-Jeremy-etc-book}), the \textit{Newtonian-Sobolev space} $N^{1,p}(X;Y)$ is defined as the collection of equivalence classes of mappings $f:X\rightarrow Y$ in $L^p(X;Y)$ with an upper gradient in $L^p(X)$. See \cite{Juha-Jeremy-etc-book} for a thorough exposition and relations of this Sobolev space notion with others defined in similar settings.

One weakness of the aforementioned definition of upper gradients is the dependence on rectifiable curves of $X$. To ensure there are enough such curves, the following property is typically assumed for the source metric measure space $(X,d,\mu)$ in this context (see \cite{hei:lectures}, \cite{Juha-Jeremy-etc-book}).

We say that a metric measure space $(X,d, \mu)$ \textit{supports a $p$-Poincar\'e inequality with data $\la$ } if there are $p>0$, $C>0$ and $\lambda\geq 1$ such that  if $u:X\rightarrow \R$ is a function with upper gradient $g:X\rightarrow [0,\infty]$, then
	\begin{equation}
		\Barint_{B} |u-u_B| d\mu \leq C \diam B \left( \Barint_{\lambda B} g^p d\mu  \right)^{1/p},
	\end{equation}
for every open ball $B$ in $X$. If the data $\la$ is implied by the context, we say $X$ \textit{supports a $p$-PI}.

The following notions for measures are also typically needed in this setting. We say a metric measure space $(X,d,\mu)$ is \textit{locally $Q$-homogeneous}, for some $Q>0$, if for every compact set $K\subset X$ there are constants $\tilde{R}_{\text{hom}}(K)>0$, $\tilde{C}_{\text{hom}}(K)\geq 1$ such that
$$
\frac{\mu(B(x,r_2))}{\mu(B(x,r_1))}\leq \tilde{C}_{\text{hom}}(K) \left(\frac{r_2}{r_1}\right)^Q,
$$ for all $x\in K$ and scales $0<r_1<r_2<\tilde{R}_{\text{hom}}(K)$. We say a metric space $(X,d)$ is \textit{locally $Q$-homogeneous} if there is a measure $\mu$ on $X$ such that $(X,d,\mu)$ is locally $Q$-homogeneous. One particular property due to local homogeneity that we  need is the lower bound on the measure of a ball by its radius to a power. More specifically, for $R_{\text{hom}}(K)=\tilde{R}_{\text{hom}}(K)/3$ and a potentially larger constant $C_{\text{hom}}(K)$, which still only depends on $K$, it can be shown that 
\begin{equation}\label{eq:lower-AR}
    \frac{r^Q}{C_{\text{hom}}(K)} \leq \mu (B(x,r)),
\end{equation} for all $x\in K$ and $r\in (0,R_{\text{hom}}(K))$.

On the other hand, it can be necessary at times to also have a similar upper bound on the measure. We say that $(X,d,\mu)$ is \textit{$Q$-Ahlfors regular} for some $Q>0$ if there is a constant $C_A>0$ such that for all $x\in X$ and all $r\in(0,\diam X)$ we have
    $$
    \frac{1}{C_A}r^Q \leq \mu (B(x,r)) \leq C_A r^Q.
    $$ 
    We say a metric space $(X,d)$ is \textit{$Q$-Ahlfors regular} if there is a measure $\mu$ on $X$ such that $(X,d,\mu)$  is $Q$-Ahlfors regular. Note that $Q$-regularity of a measure implies the $Q$-homogeneous property.

The above notions are especially useful in establishing a connection between Newtonian-Sobolev and quasisymmetric mappings. Given a homeomorphism $\eta:[0,\infty)\rightarrow [0,\infty)$, an embedding $f:X\rightarrow Y$ is \textit{$\eta$-quasisymmetric} if for all distinct $x, y, z\in X$ we have
$$
\frac{|f(x)-f(y)|}{|f(z)-f(y)|}\leq \eta\left( \frac{|x-y|}{|z-y|} \right).
$$ In fact, under the assumption that $X$ and $Y$ are both locally $Q$-homogeneous, an \textit{analytic} version of the above definition can be derived involving the space $N^{1,Q}(X;Y)$, similar to the analytic definition in the Euclidean setting $\R^n$ involving the usual Sobolev space $W^{1,n}(\R^n;\R^n)$ (see Theorem 9.8 in \cite{QCanalyticDefHKSTyson}). We do not state that version, since more notions would be needed and we will not use it directly.

\section{Distortion of dimensions}\label{sec:DimDistProofs}

\subsection{Compactly H\"older mappings}\label{subsec:CH maps}
Suppose $Y$ is an arbitrary metric space and $X$ is a doubling and proper metric space with a fixed system of dyadic cubes as in Section \ref{sec:background}. Since all mappings considered are  defined in $X$ and into $Y$, we set $CH^{p,\alpha}=CH^{p,\alpha}(X,Y)$. Let $f\in CH^{p,\alpha}$ for $p>1$, $\alpha\in (0,1)$ and $E\subset X$ be a bounded set. We plan on using the system of dyadic cubes to cover $E$, Proposition \ref{prop:boxdyadic} and $d_E=\dim_B E$ to count how many we need at each level $k$, and map them with $f$ into $Y$ to cover $f(E)$. Inequality \eqref{eq: CH-inequality} is crucial to this argument in order to control the size of the images of cubes under $f$, indicating how many times we have to sub-divide the cubes further so that we achieve small enough images for the covering of $f(E)$.

\begin{proof}[Proof of Theorem \ref{thm:main_Holder}]
		Suppose $d_E<\frac{pd_E}{\alpha p+d_E}$ and let $d\in (d_E, p-\alpha p)$, and $D := \frac{pd}{\alpha p+d}$. The proof is analogous in the case $d_E\geq \frac{pd_E}{\alpha p+d_E}$ by letting $d>d_E$.
  
    To prove \eqref{eq:CHBoxdistortion} in this case it is enough to show that there is some $C'>0$ such that for all $r \in (0, \diam f(E))$ we have $N(f(E),r)\leq C' r^{-D}$. This would imply  $\dim_B f(E) \leq D$ for all $d\in (d_E, p-\alpha p)$ and the desired upper bound is achieved by taking $d\rightarrow d_E$.

  
		By stability of the Minkowski dimension under closure (see p.~18, 20 in \cite{FraserBook}), we can assume without loss of generality that $E$ is closed. Since $X$ is proper, this implies that $E$ is compact.
		Denote by $k_E\in \N$ the smallest integer such that $\de^{k_E} \leq r_E/4 C_0$, where $r_E$ is as in  Definition \ref{def: CH maps}. This choice is to ensure that we focus on levels $k\geq k_E$ for which the large balls $B(Q_i^k)$ have radius less than $r_E$, which allows for the application of \eqref{eq: CH-inequality}.  Since $E$ is a compact set and $f$ is continuous, $f(E)$ is also compact. Hence, by covering $f(E)$ by finitely many balls of radius $\de^{k_E d/D}/3$ and using the stability of the Minkowski dimension under finite unions (see p.~18, 20 in \cite{FraserBook}), we can further assume that $$\diam f(E) <2\de^{k_E d/D}/3 <\de^{k_E d/D}.$$ 
		
		Let $r \in (0,\diam f(E))$ and $k_r \geq k_E$ be the largest integer such that
		\begin{equation}\label{eq:def_k_r}
			\de^{\frac{(k_r+1)d}{D}}< r \leq \de^{\frac{k_rd}{D}},
		\end{equation} which exists because of the assumption $\diam f(E)<\de^{k_E d/D}$.
	
		For any $k\geq k_r$, we call a dyadic cube $Q_i^k$ of level $k$ that intersects $E$ a $k_r$-\textbf{major} cube if $\diam f(Q_i^k)\geq r$ and $k_r$-\textbf{minor} otherwise. Note that if $Q_i^k$ is $k_r$-major, by uniform continuity of $f$, there is some higher level $\ell \geq k$ for which there are no $k_r$-major cubes $Q_j^\ell$ lying in $Q_i^k$. We will count the number of $k_r$-minor cubes that occur if we increase the level by the smallest number necessary to only have $k_r$-minor sub-cubes of each $Q_i^k$ intersecting $E$. This collection of $k_r$-minor cubes will then result in a covering of $E$, which under $f$ yields a covering of $f(E)$ by sets of diameters at most $r$. An upper bound for the number of these sets also bounds $N(f(E), r)$ from above.
		
		Since $d>d_E$, by Proposition \ref{prop:boxdyadic} and \eqref{eq:def_k_r} there is  $C>0$ such that
		\begin{equation}\label{eq:dimB_minor-bound}
			N_{k_r}(E)\leq C \de^{-k_r d}\leq C r^{-D}.
		\end{equation}
		
		Note that if all such $Q_i^k$ are minor, then their images under $f$ are all sets of diameter at most $r$ covering $f(E)$ and \eqref{eq:dimB_minor-bound} provides the bound $N(f(E),r)\leq C r^{-D}$, concluding the proof.
		
		Denote by $M(k)$ the number of $k_r$-major cubes of level $k\geq k_r$ intersecting $E$ and suppose $M(k)>0$. The finite collection $\{B_i^k\}_{i\in I_k}$ also covers $E$ and has the property that $\frac{c_0}{6 C_0}B_i^k\cap \frac{c_0}{6 C_0}B_j^k=\emptyset$ for all distinct $i,j\in I_k$ by Theorem \ref{thm:Dydadic} (iii). Hence, by $f\in CH^{p,\alpha}$ and \eqref{eq: CH-inequality} we have
		$$
		\diam f(B_i^k)\leq C_i (\diam B_i^k)^\alpha,
		$$ 
        for all $i\in I_k$ for which $Q_i^k$ is $k_r$-major. But $f(Q_i^k)\subset f(B_i^k)$, so $\diam f(B_i^k)\geq r$ and due to \eqref{eq:def_k_r} we have for the above inequality that
		$$
		\de^{\frac{p d}{D}} \de^{\frac{k_r p d}{D}}\leq C_i^p (4C_0)^{p\alpha} \de^{kp\alpha}.
		$$ 
        Summing over all such $k_r$-major cubes of level $k$, i.e. over all corresponding $i\in I_k^+$, where $I_k^+ := \{ i\in I_k : Q_i^k \,\, \text{is} \,\, k_r\text{-major cube} \}$, we get
		$$
		M(k) \de^{\frac{k_r p d}{D}} \leq \de^{\frac{-p d}{D}}(4C_0)^{p\alpha}\sum_{i\in I_k^+}C_i^p \de^{kp\alpha}.
		$$ However, by $f\in CH^{p,\alpha}$ we have $\sum_{i\in I_k^+}C_i^p\leq C_E$. Summing over all levels $k\geq k_r$ and keeping in mind that the sequence $(M(k))_{k\geq k_r}$ is eventually $0$, we get
		$$
		\sum_{k=k_r}^{\infty} M(k) \de^{\frac{k_r p d}{D}} \leq \de^{\frac{-p d}{D}}(4C_0)^{p\alpha} C_E \sum_{k=k_r}^{\infty} (\de^{p\alpha})^k.
		$$ Since the series on the right-hand side is geometric, the above implies
		$$
		\sum_{k=k_r}^{\infty} M(k) \leq \tilde{C} \de^{\frac{-k_r p d}{D}} \de^{k_r p\alpha},
		$$ for some $\tilde{C}=\tilde{C}_{\delta,p, \alpha, d}>0$ that does not depend on $r$. But by definition of $D$, we have 
        $$
        \frac{-k_r p d}{D}+k_r p\alpha= -k_r d,
        $$ hence		
		\begin{equation}\label{eq:dimB_MajorNumber}
		\sum_{k=k_r}^{\infty} M(k) \leq \tilde{C} \de^{-k_r d}.
		\end{equation}
		Note that by (i) and (iii) of Theorem \ref{thm:Dydadic} and by doubling property of $X$, in each cube $Q_i^k$ there are at most $N_d$ cubes of level $k+1$, where $N_d$ only depends on the doubling constant of $X$ and the constants $c_0, C_0$. Hence, \eqref{eq:dimB_MajorNumber} implies that the number of minor cubes we get inside all major cubes is at most $N_d  \tilde{C} \de^{-k_r d}$. All these cubes, along with the $k_r$-minor cubes of level $k_r$, provide a covering of $E$ that when mapped under $f$ yield a covering of $f(E)$ by sets of diameter at most $r$, whose number by \eqref{eq:dimB_minor-bound} and \eqref{eq:dimB_MajorNumber}  cannot exceed
		$$
		C r^{-D}+ N_d  \tilde{C} \de^{-k_r d} \leq C' r^{-D}.
		$$ Thus, $N(f(E),r) \leq C' r^{-D}$. Since $r$ was arbitrary, the proof is complete.
    \end{proof}

\subsection{Newtonian-Sobolev mappings}\label{subsec:Newtonian}
Let $(X,d,\mu)$ be a proper
%
%
metric space with locally $Q$-homogeneous measure $\mu$, supporting a $Q$-Poincar\'e inequality with data $\la\geq 1$,
and let $(Y,d_Y)$ be an arbitrary metric space. In order to show that continuous mappings with locally integrable upper gradients are compactly H\"older, we need the following properties.

\begin{lemmaA}\label{Le:MS-inequality}
    Let $p > Q$ and  $f: X \rightarrow Y$ be a continuous mapping with upper gradient $g\in L^p_{\loc}(X)$. For any compact $K\subset X$ there are $C_K\geq 1$, $R_K>0$ such that for all balls $B=B(x,r)\subset K$ with $x\in K$, $r<R_K$ we have
    	\begin{equation}\label{eq:MS_forSob}
    	    |f(x)-f(y)|\leq C_K (\diam B)^{Q/p}|x-y|^{1-Q/p} \left( \Barint_{4 \la B} g^p d\mu \right)^{1/p},
    	\end{equation} for all $x, y\in B$.
\end{lemmaA}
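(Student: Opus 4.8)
The plan is to reduce the $Y$-valued estimate to a scalar Morrey--Sobolev inequality and then prove the latter by a telescoping argument along a chain of balls. The reduction is the conceptual heart, since the $Q$-Poincar\'e inequality is stated only for real-valued functions. I would fix the point $y$ and introduce $u:=|f(\cdot)-f(y)|$, the distance in $Y$ from $f(\cdot)$ to the fixed value $f(y)$. Because the distance-to-a-point map on $Y$ is $1$-Lipschitz, post-composition preserves upper gradients, so $g$ is an upper gradient of $u$; moreover $u$ is continuous with $u(y)=0$ and $u(x)=|f(x)-f(y)|$, so it suffices to bound $|u(x)-u(y)|$ by the scalar machinery. (Equivalently one could embed $Y$ isometrically into a Banach space via the Kuratowski--Fr\'echet embedding and invoke a vector-valued Poincar\'e inequality, but the scalar distance function keeps everything elementary.)

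For the scalar estimate I would first fix scales using properness: let $\hat K$ be the closed $R_K$-neighborhood of $K$, which is compact, so that $g\in L^p(\hat K)$ is genuinely integrable and the local $Q$-homogeneity constants $\tilde C_{\mathrm{hom}}(\hat K)$, $\tilde R_{\mathrm{hom}}(\hat K)$ govern every ball met below; I would choose $R_K$ small enough that $4\la r<\tilde R_{\mathrm{hom}}(\hat K)$ whenever $r<R_K$. Writing $\rho:=|x-y|\le \diam B$, I would telescope along the dyadic balls $B_i^x:=B(x,2^{-i+1}\rho)$ shrinking to $x$ and $B_i^y:=B(y,2^{-i+1}\rho)$ shrinking to $y$, meeting at the common ball $B_\ast:=B(x,2\rho)$, which contains $y$. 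Continuity of $u$ gives $u_{B_i^x}\to u(x)$ and $u_{B_i^y}\to u(y)$, so $|u(x)-u(y)|$ is controlled by $\sum_i|u_{B_{i+1}^x}-u_{B_i^x}|$, the analogous $y$-sum, and one bridging term comparing $u_{B_\ast}$ to the top $y$-ball. Each consecutive difference is handled by the standard device $|u_{B_{i+1}}-u_{B_i}|\le \tfrac{\mu(B_i)}{\mu(B_{i+1})}\,\Barint_{B_i}|u-u_{B_i}|\,d\mu$, estimating the measure ratio by doubling (a consequence of local homogeneity) and the oscillation by the $Q$-Poincar\'e inequality as $C\,\diam B_i\,(\Barint_{\la B_i}g^p\,d\mu)^{1/p}$.

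The summation is where $p>Q$ enters and where the prescribed form is produced. To replace each local average $\Barint_{\la B_i}g^p\,d\mu$ by the target $\Barint_{4\la B}g^p\,d\mu$, I would enlarge the domain (every $\la B_i\subset 4\la B$ by the scale choice above) and pay a measure factor bounded, via local homogeneity with a common center (and a harmless doubling comparison for the $y$-chain balls), by $\mu(4\la B)/\mu(\la B_i)\le C\,(2^{i}r/\rho)^Q$. Combined with $\diam B_i\le C\,2^{-i}\rho$, the $i$-th term becomes a constant multiple of $2^{-i(1-Q/p)}\,\rho^{1-Q/p}\,r^{Q/p}\,(\Barint_{4\la B}g^p\,d\mu)^{1/p}$. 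Since $p>Q$ the exponent $1-Q/p$ is positive, the geometric series converges, and I obtain $|f(x)-f(y)|\le C_K\,r^{Q/p}|x-y|^{1-Q/p}(\Barint_{4\la B}g^p\,d\mu)^{1/p}$. Finally $r$ may be traded for $\diam B$ up to a constant: $\diam B\le 2r$ is trivial, and the reverse comparison $r\le C\,\diam B$ holds because a doubling metric measure space supporting a Poincar\'e inequality is quasiconvex, whence small balls have diameter comparable to their radius; this yields \eqref{eq:MS_forSob}.

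I expect the main obstacle to be twofold. First is the legitimacy of applying the real-valued Poincar\'e machinery to a metric-space target, which the distance-function reduction resolves cleanly. Second is the bookkeeping that lands the average on exactly $4\la B$ while splitting the total homogeneity power $Q$ correctly into the factor $(\diam B)^{Q/p}$ attached to the coarse scale $r$ and the factor $|x-y|^{1-Q/p}$ coming from the chain length; this requires keeping careful track, across both chains and the bridge, of which dilated balls remain inside $4\la B$ and below the homogeneity scale $\tilde R_{\mathrm{hom}}(\hat K)$.
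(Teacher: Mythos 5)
Your proposal takes a genuinely different route from the paper: the paper does not prove Lemma \ref{Le:MS-inequality} from scratch at all, but cites Theorem 9.2.14 of \cite{Juha-Jeremy-etc-book} (together with Theorem 8.3.2 there to handle the quasiconvexity hypothesis), whereas you reconstruct the chaining argument behind that citation. Your reduction to real-valued functions via $u=d_Y(f(\cdot),f(y))$ is correct---post-composition with a $1$-Lipschitz function preserves upper gradients---and is arguably cleaner than the Kuratowski--Fr\'echet embedding route the paper implicitly relies on. The core is also sound: telescoping along shrinking balls at $x$ and $y$ with a bridge, the consecutive-difference device, doubling from local homogeneity, the $Q$-Poincar\'e inequality upgraded to a $p$-average by H\"older/Jensen (you write the $p$-form directly; this upgrade should be stated), convergence of the geometric series because $1-Q/p>0$, and the final trade $r\le C\diam B$ via quasiconvexity/connectedness, which is exactly the point the paper addresses by invoking Theorem 8.3.2. (In fact, for the application in Theorem \ref{thm:NewtonDim} the $r^{Q/p}$ form would already suffice, since there $(\diam B_i)^{Q/q}$ is immediately bounded by $(2r)^{Q/q}$.)

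There is, however, one concrete error in the bookkeeping you yourself flagged as delicate: the claim ``every $\la B_i\subset 4\la B$'' is false for the chain as you built it. Since the lemma is used in the paper to bound the H\"older seminorm $|f|_{\alpha,B_i}$, estimate \eqref{eq:MS_forSob} must hold for \emph{all} pairs $x,y\in B$, not only pairs involving the center. Write $B=B(x_0,r)$ and take $x,y$ nearly diametrically opposite near the boundary of $B$, so that $|x-x_0|\approx r$ while $\rho=|x-y|\approx 2r$. Then your largest dilated ball $\la B_\ast=B(x,2\la\rho)$ reaches out to distance roughly $r+4\la r$ from $x_0$, hence is not contained in $B(x_0,4\la r)$, and the domain-enlargement step $\Barint_{\la B_i}g^p\,d\mu\le \frac{\mu(4\la B)}{\mu(\la B_i)}\Barint_{4\la B}g^p\,d\mu$ (which genuinely requires containment) breaks. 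This is a constant-tuning issue, not a conceptual one, and can be repaired in either of two ways: (i) shrink the chain, taking top balls $B(x,\rho/2)$, $B(y,\rho/2)$ and bridge $B(x,3\rho/2)$; then every dilated ball lies in $B(x_0,r+3\la r)\subset 4\la B$ because $\la\ge 1$, and the same telescoping goes through; or (ii) keep your chain and prove the lemma with $5\la B$ in place of $4\la B$, which serves the downstream argument equally well after replacing $4\la$ by $5\la$ in \eqref{eq:SobIsCH_radius} and in the choice $\tau=\eps/4\la$. Relatedly, your scale restriction should leave the same margin: since $4\la B$ is centered at $x_0$ while your chain balls are centered at $x$ or $y$, the homogeneity estimates need a recentering step, so require something like $8\la r<\tilde{R}_{\text{hom}}(\hat K)$ rather than $4\la r<\tilde{R}_{\text{hom}}(\hat K)$.
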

The above property is essentially a Morrey-Sobolev inequality counterpart for Newtonian-Sobolev maps, see Theorem 9.2.14 in \cite{Juha-Jeremy-etc-book}. Note that the assumption in Theorem 9.2.14 that $X$ is quasiconvex is not necessary for our context, since we apply the result on a large compact ball containing our set of interest (see \eqref{eq:E in comp_ball} below, and Theorem 8.3.2 in \cite{Juha-Jeremy-etc-book}).

\begin{lemmaA}\label{Le:Make_disjoint_integrals}
	Let $1\leq q < p$ and $\tau\in (0,1)$. For each compact set $K\subset X$ there is a constant $C_K'\geq 1$ and a radius $R_K'>0$  such that for all $g\in L^q(K)$, there is a Borel function $\tilde{g} \in L^{p/q}(K) \subset L^1(K)$ so that
	\begin{equation}\label{eq:MaximalFunIneq}
	    \Barint_{B(x,r)}g^q d\mu \leq C_K' \Barint_{B(x,\tau r)}\tilde{g}  d\mu,
	\end{equation} for all $x\in K$ and $0<r<R_K'$.
\end{lemmaA}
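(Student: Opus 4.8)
The plan is to take $\tilde g$ to be a localized Hardy--Littlewood maximal function of $g^q$ and to derive \eqref{eq:MaximalFunIneq} from a purely geometric comparison of concentric balls combined with local homogeneity. Fix a compact neighborhood $K'\supset K$ and a threshold $R_0\in(0,\tilde R_{\text{hom}}(K'))$, extend $g^q$ by zero outside $K'$, and set
\[
\tilde g(y):=\sup_{0<s<R_0}\Barint_{B(y,s)}g^q\,d\mu,\qquad y\in X.
\]
I would then choose $R_K'>0$ so small that $2R_K'<R_0$ and $B(x,R_K')\subset K'$ for every $x\in K$, so that all balls appearing below are centered in $K'$ and have radii under the homogeneity scale.

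First I would establish a pointwise comparison. For $x\in K$, $0<r<R_K'$ and any $y\in B(x,\tau r)$ one has the inclusions $B(y,(1-\tau)r)\subset B(x,r)\subset B(y,(1+\tau)r)$, whence
\[
\Barint_{B(x,r)}g^q\,d\mu
\le\frac{\mu(B(y,(1+\tau)r))}{\mu(B(x,r))}\,\Barint_{B(y,(1+\tau)r)}g^q\,d\mu
\le \frac{\mu(B(y,(1+\tau)r))}{\mu(B(y,(1-\tau)r))}\,\tilde g(y),
\]
using $\mu(B(x,r))\ge\mu(B(y,(1-\tau)r))$ and $(1+\tau)r<R_0$ in the last step. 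By local $Q$-homogeneity applied to the center $y\in K'$ and radii below $R_0$, the measure ratio is bounded by $\tilde{C}_{\text{hom}}(K')\bigl(\tfrac{1+\tau}{1-\tau}\bigr)^Q=:C_K'$, a constant independent of $x$, $r$, $y$ and of $g$. Since the left-hand side does not depend on $y$, averaging the resulting inequality $\Barint_{B(x,r)}g^q\,d\mu\le C_K'\,\tilde g(y)$ over $y\in B(x,\tau r)$ produces exactly \eqref{eq:MaximalFunIneq}.

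It remains to verify $\tilde g\in L^{p/q}(K)$, and this is where the quantitative content and the main obstacle lie. The bound $g\in L^q(K)$ alone only gives $g^q\in L^1$, whose maximal function lies merely in weak-$L^1$; the genuine requirement $\tilde g\in L^{p/q}$ forces $g^q\in L^{p/q}(K')$, i.e. $g\in L^p(K')$ — which is precisely the integrability available for the upper gradient in Theorem \ref{thm:NewtonDim}. Granting this, since $\mu$ is doubling at small scales on a neighborhood of $K'$, the restricted, localized maximal operator satisfies a weak-$(1,1)$ estimate through a basic covering (Vitali-type) lemma and is therefore, by Marcinkiewicz interpolation, bounded on $L^{p/q}(K')$ because $p/q>1$; hence $\|\tilde g\|_{L^{p/q}(K)}\le\|\tilde g\|_{L^{p/q}(K')}\lesssim\|g^q\|_{L^{p/q}(K')}<\infty$. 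The geometric comparison and the localization of the homogeneity constants are routine once $K'$ and $R_0$ are fixed; the delicate point is the passage from $L^1$ information on $g^q$ to an $L^{p/q}$ bound on its maximal function, which genuinely uses the strong $g\in L^p$ hypothesis and the failure of the endpoint $s=1$ for $M$.
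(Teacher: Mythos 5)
Your proof is correct and is essentially the paper's own argument: the paper does not prove this lemma but cites the Maximal Function Theorem (Chapter 2 of \cite{hei:lectures}) and Lemma 3.3 of \cite{BaloghAGMS}, remarking that $\tilde g$ is in fact ``a conveniently restricted maximal function of $g^q$'' --- which is precisely your construction, with the same three ingredients (localized maximal function, comparison of the concentric balls $B(y,(1-\tau)r)\subset B(x,r)\subset B(y,(1+\tau)r)$ via local $Q$-homogeneity, and weak-$(1,1)$ plus interpolation for the $L^{p/q}$ bound). Your side remark about the hypothesis is moreover a genuine catch, not a defect of your proof: as literally stated, ``for all $g\in L^q(K)$'' makes the lemma false, and not merely unreachable by the maximal-function method. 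Indeed, take $X=\R^n$ with Lebesgue measure and $g(x)=|x-x_0|^{-n/p}$ for some $x_0\in K$, so that $g\in L^q\setminus L^p$; for any candidate $\tilde g\in L^{p/q}$, H\"older's inequality gives $\Barint_{B(x_0,\tau r)}\tilde g\, d\mu\le \|\tilde g\|_{L^{p/q}(B(x_0,\tau r))}\,\mu(B(x_0,\tau r))^{-q/p}$, while $\Barint_{B(x_0,r)}g^q\,d\mu\sim r^{-nq/p}\sim \mu(B(x_0,\tau r))^{-q/p}$, so \eqref{eq:MaximalFunIneq} would force $\|\tilde g\|_{L^{p/q}(B(x_0,\tau r))}\gtrsim 1$ as $r\to 0$, contradicting the absolute continuity of the integral of $\tilde g^{\,p/q}$. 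The intended hypothesis is $g\in L^p$ on a neighborhood of $K$, which is exactly what is available when the lemma is invoked in the proof of Theorem \ref{thm:NewtonDim} (there $g\in L^p_\loc(X)$) and what the cited Lemma 3.3 of \cite{BaloghAGMS} assumes; under that reading your argument is complete.
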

The above is a corollary of the Maximal Function Theorem, see Chapter 2 in \cite{hei:lectures} and Lemma 3.3 in \cite{BaloghAGMS} for a proof. The reason we are restricted to $q$ strictly less than $p$ is due to $\tilde{g}$ being in fact a conveniently restricted maximal function of $g^q$.

Suppose $f:X\rightarrow Y$ is continuous with an upper gradient $g\in L^p_{\loc}(X)$. Note that for $1<q<p$ we also have $g\in L^q_{\loc}(X)$. We plan on using Lemmas \ref{Le:MS-inequality} and \ref{Le:Make_disjoint_integrals} to show that $f\in CH^{q,1-Q/q}$ for all $q\in (Q,p)$, which is enough to achieve \eqref{eq:SobBoxdistortion}.

    \begin{proof}[Proof of Theorem \ref{thm:NewtonDim}]
		Let $E\subset X$ be compact and $\eps\in (0,1)$. Note that the centers of balls that cover $E$ in the definition of compactly H\"older mappings do not lie in $E$ necessarily, while it is a requirement for the inequalities in Lemmas \ref{Le:MS-inequality} and \ref{Le:Make_disjoint_integrals}. Thus, we need to apply these properties to a potentially larger compact set than $E$. Using a similar argument to that in the proof of Proposition \ref{prop:boxdyadic}, it can be shown that
        \begin{equation}\label{eq:E in comp_ball}
            E\cup\left(\bigcup_{x\in E}B(x,1/4)\right) \subset B(x_E, \diam E+1/2),
        \end{equation} for some $x_E\in E$. Thus, if $K$ is the closure of $B(x_E,  \diam E+1/2)$, and $\mathcal{B}$ is a covering of $E$ by balls of radius at most $1/10$, then all elements of $\mathcal{B}$ lie entirely in $K$, along with their centers.
        We plan on applying the above Lemmas for $\tau:=\eps/4\la$ on $K$, which is a compact set due to $X$ being proper, and set
        \begin{equation}\label{eq:SobIsCH_radius}
            r_E:= \min\left\{ \frac{R_K}{4\la}, \frac{R_K'}{4\la}, \frac{1}{10}, \frac{R_\text{hom}(K)}{10} \right\} ,
        \end{equation} where $R_K, R_K'$ are the radii in Lemmas \ref{Le:MS-inequality} and \ref{Le:Make_disjoint_integrals}, respectively,  $\la$ is the data from the $Q$-PI of $X$ and $R_{\text{hom}}(K)$ is the local homogeneity radius of $X$ for the compact set $K$.
        
		Suppose $\{B(x_i,r)\}_{i\in I}$ is a  cover of $E$ with $r<r_E$ and $B(x_i,\eps r)\cap B(x_j,\eps r)=\emptyset$ for all distinct $i,j\in I$. We will show that an inequality of the form \eqref{eq: CH-def-inequality} holds. By \eqref{eq:SobIsCH_radius} and Lemma \ref{Le:MS-inequality}, we have for all balls $B_i:=B(x_i,r)$ the inequality
		$$
		|f(x)-f(y)|\leq C_K (\diam B_i)^{Q/q}|x-y|^{1-Q/q} \left( \Barint_{4 \la B_i} g^q d\mu \right)^{1/q},
		$$ for all distinct $x,y\in B_i$, where $C_K$ may also depend on $q$. We set $\alpha:=1-Q/q$ and divide with $|x-y|^{1-Q/q}=|x-y|^\alpha$ to get
        $$
        |f|_{\alpha,B_i}\leq C_K (\diam B_i)^{Q/q} \left( \Barint_{4 \la B_i} g^q d\mu \right)^{1/q}.
        $$
  
        Note that by \eqref{eq:lower-AR} we could bound uniformly from above the term $C_K (\diam B_i)^{Q/q}/\mu(4\la B_i)^{1/q}$ on the right, and leave just the integral terms to depend on $i$. However, setting $C_i$ to be the uniform constant times the integral on the right hand side of the above inequality would not be enough to prove the compactly H\"older property. The reason for this is the potentially large overlap the balls $4 \la B_i$ might have, contradicting any upper bound $C_E$ on the $q$-sum of the constants $C_i$. To avoid this issue, we apply Lemma \ref{Le:Make_disjoint_integrals} for $\tau=\eps/4\la$ on all integrals on the right hand side, which is possible due to the choice \eqref{eq:SobIsCH_radius} and  $r<r_E$, and we get
        $$
        |f|_{\alpha,B_i}\leq C_K (C_K')^{1/q} (\diam B_i)^{Q/q} \left( \Barint_{\eps B_i} \tilde{g} d\mu \right)^{1/q}.
		$$ 
        Since $X$ is locally $Q$-homogeneous, by \eqref{eq:lower-AR} the quotient $\frac{(\diam B_i)^Q}{\mu(\eps B_i)}$ is at most $\tilde{C}:=C_{\text{hom}}(K) \left(\frac{1}{\eps}\right)^Q$. Hence, 
        due to $B(x_i,\eps r)\cap B(x_j,\eps r)=\emptyset$ and 
        $$
        \sum_{i\in I} \int_{\eps B_i} \tilde{g} d\mu =  \int_{\bigcup\limits_{i\in I} \eps B_i} \tilde{g} d\mu \leq  \int_{K} \tilde{g} d\mu,
        $$ 
        there is $C_E=C_K^q C_K' \tilde{C} \left( \int_{K} \tilde{g} d\mu \right)<\infty$ such that
        $$
        \sum\limits_{i\in I} |f|_{\alpha,B_i}^q \leq \sum\limits_{i\in I} C_K^q C_K' \tilde{C}  \int_{\eps B_i} \tilde{g} d\mu  \leq C_E.
        $$
        Since $E$ and $\eps$ were arbitrary, and $K$ depends only on $E$, this implies that $f\in CH^{q,1-Q/q}$ for all $q\in (Q,p)$ as needed. By Theorem \ref{thm:main_Holder} we have that
        $$
        \dim_B f(E)\leq \frac{q d_E}{q-Q+d_E},
        $$ for all $q\in (Q,p)$, which implies \eqref{eq:SobBoxdistortion} for $q\rightarrow p$.
  
    \end{proof}
    
\subsection{Quasisymmetric mappings}
    Suppose $Q>1$ and $(X,d,\mu)$ is a proper, $Q$-Ahlfors regular metric measure space that supports a $p_0$-PI for $p_0\in(1,Q)$, and $(Y,d_Y)$ is $Q$-Ahlfors regular. Let $f:X\rightarrow Y$ be a quasisymmetric mapping and $E\subset X$ bounded.

    \begin{proof}[Proof of Corollary \ref{cor:QS_Dim}]
        Under the above assumptions on $X$ and $Y$, Heinonen and Koskela showed in \cite{HeinKoskQCbegin} a higher integrability result similar to that by Gehring in $\R^n$ \cite{Gehring73}. More specifically, $f$ has an upper gradient lying in $L^p_{\loc}(X)$, for some $p>Q$ (see Theorem 9.3 in \cite{HeinKoskQCbegin} 
        %
        ). The membership of $f$ in $CH^{q,1-Q/q}$ for all $q\in (Q,p)$ and the upper bound on $\dim_B f(E)$ then follow from Theorem \ref{thm:NewtonDim}. By an identical argument on the inverse of $f$, which is also a quasisymmetric mapping, the lower bound in \eqref{eq:QSboxdistortion} is also determined (see, for instance, Proposition 10.6 in \cite{hei:lectures}).
    \end{proof}

\section{Final Remarks}\label{sec:Final Remarks}
%
%
%
%
%
%

There are ways to reduce the assumptions on some of our main results. For instance, due to $X$ being proper and $E$ bounded, we could only require that $X$ supports a local Poincar\'e inequality in Theorem \ref{thm:NewtonDim} and the outcome would still be true. Moreover, instead of $f$ having an upper gradient $g\in L^p_{\text{loc}}$ in Theorem \ref{thm:NewtonDim}, the result would hold even if $g$ was a $p$-weak upper gradient instead (see Chapter 6 in \cite{Juha-Jeremy-etc-book}). Additionally, the requirements for $f$ and $X$, $Y$ in Corollary \ref{cor:QS_Dim} can also be reduced. It would be enough to have that $f$ is a local quasisymmetric mapping, $X$ and $Y$ are locally $Q$-homogeneous (instead of Ahlfors regular), as long as $X$ is a Loewner space and $Y$ is linearly locally connected (see \cite{hei:lectures}, \cite{HeinKoskQCbegin}). However, for these generalizations we would have to define more notions and state strong analytical results only to slightly generalize our assumptions. Hence, in an effort to keep the manuscript more accessible to a broader mathematical audience, we invite the interested reader to fill in the details and complete this generalization.

It should be noted that in the proofs of our main result for compactly H\"older mappings, the definition was not fully utilized. More specifically, for the main argument we employed \eqref{eq: CH-inequality}, and not \eqref{eq: CH-def-inequality} directly. Hence, one can define the \textbf{weak compactly H\"older} mappings as in Definition \ref{def: CH maps}, but by replacing \eqref{eq: CH-def-inequality} with \eqref{eq: CH-inequality} and the assumption on the $p$-sum of the sequence $C_i$. If we denote this class by $CH^{p,\alpha}_w$, we have the inclusion $CH^{p,\alpha}\subset CH_w^{p,\alpha}$, and Theorem \ref{thm:main_Holder} would still be true for $f\in CH^{p,\alpha}_w$ as well.

While Corollary \ref{cor:QS_Dim} is quantitative in the sense that the bounds only eventually depend on $\eta$, the dependence of the Newton-Sobolev exponent $p$ on $\eta$ is very implicit. This $p$ is called an ``exponent of higher integrability" of $f$ and has attracted a lot of interest due to the implications it carries for the theory of quasiconformal and quasisymmetric mappings. In fact, other than the case of $X=Y=\R^2$ with the usual metric and measure, which is due to Astala \cite{Astala}, we do not have an explicit formula not even for quasiconformal mappings on $\R^n$ for $n>2$. See the Introduction and Remark 4.4 in \cite{OurQCspec} for a brief discussion of higher integrability exponents in the context of dimension distortion and \cite{im:gft} for a complete exposition.

Recall that we gave the metric definition of quasisymmetric mappings in Section \ref{sec:background}, while there are two more definitions (analytic, geometric). There are, in general, requirements to be imposed on the metric spaces $X$ and $Y$ in order to have all definitions equivalent. However, the strictly metric nature of the definition of compactly H\"older mappings motivates the following question.

\begin{question}\label{que: metric qs implies CH}
    Suppose $X$ is a doubling, connected metric space and $Y$ is an arbitrary metric space. If $f:X\rightarrow Y$ is an $\eta$-quasisymmetric embedding, under what metric assumptions on $X$, $Y$ and/or on $\eta$ are there constants $p>1$, $\alpha\in (0,1)$, appropriately dependent on $\eta$, such that $f$ is a (weak) $(p,\alpha)$-compactly H\"older mapping?
\end{question}
Note that $X$ needs to be connected to avoid examples of quasisymmetric mappings that are not even locally H\"older continuous (see Corollary 11.5 and the following discussion in \cite{hei:lectures}). In addition, despite the example of the $\alpha$-snowflake quasisymmetric mapping $\id_\alpha$ mentioned in the Introduction not being $(p,\alpha)$-compactly H\"older for any $p$, there might still exist $p=p(\alpha)$ and $\beta= \beta(\alpha)\neq \alpha$ for which $\id_\alpha$ is $(p,\beta)$-compactly H\"older. Such a relation could imply Minkowski dimension distortion bounds for quasisymmetric mappings directly from Theorem \ref{thm:main_Holder}, without any measure-theoretic regularity conditions on $X$ and $Y$. 



\bibliographystyle{acm}
\bibliography{SobolevDim}
\end{document}